\providecommand{\tabularnewline}{\\}
\newcommand{\lyxaddress}[1]{
\par {\raggedright #1
\vspace{1.4em}
\noindent\par}
}
\theoremstyle{plain}
\newtheorem{thm}{\protect\theoremname}
\theoremstyle{plain}
\newtheorem{lem}[thm]{\protect\lemmaname}
\newenvironment{proof}[1][\protect\proofname]{\par
\normalfont\topsep6\p@\@plus6\p@\relax
\trivlist
\itemindent\parindent
\item[\hskip\labelsep
\scshape
#1]\ignorespaces
}{%
\endtrivlist\@endpefalse
}
\providecommand{\proofname}{Proof}
\providecommand{\lemmaname}{Lemma}
\providecommand{\theoremname}{Theorem}
\begin{document}

\title{Finding all squared integers expressible as the sum of consecutive
squared integers using generalized Pell equation solutions with Chebyshev
polynomials}

\author{Vladimir Pletser}

\maketitle

\lyxaddress{European Space Research and Technology Centre, ESA-ESTEC P.O. Box
299, NL-2200 AG Noordwijk, The Netherlands; E-mail: Vladimir.Pletser@esa.int}
\begin{abstract}
\noindent Square roots $s$ of sums of $M$ consecutive integer squares
starting from $a^{2}\geq1$ are integers if $M\equiv0,9,24$ or $33\left(mod\,72\right)$;
or $M\equiv1,2$ or $16\left(mod\,24\right)$; or $M\equiv11\left(mod\,12\right)$
and cannot be integers if $M\equiv3,5,6,7,8$ or $10\left(mod\,12\right)$.
Finding all solutions with $s$ integer requires to solve a Diophantine
quadratic equation in variables $a$ and $s$ with $M$ as a parameter.
If $M$ is not a square integer, the Diophantine quadratic equation
in variables $a$ and $s$ is transformed into a generalized Pell
equation whose form depends on the $M\left(mod\,4\right)$ congruent
value, and whose solutions, if existing, yield all the solutions in
$a$ and $s$ for a given value of $M$. Depending on whether this
generalized Pell equation admits one or several fundamental solution(s),
there are one or several infinite branches of solutions in $a$ and
$s$ that can be written simply in function of Chebyshev polynomials
evaluated at the fundamental solutions of the related simple Pell
equation. If $M$ is a square integer, it is known that $M\equiv1\left(mod\,24\right)$
and $M=\left(6n-1\right)^{2}$ for all integers $n$; then the Diophantine
quadratic equation in variables $a$ and $s$ reduces to a simple
difference of integer squares which yields a finite number of solutions
in $a$ and $s$ to the initial problem.
\end{abstract}
Keywords: Sum of consecutive squared integers ; Generalized Pell equation
; Chebyshev polynomials

MSC2010 : 11D09 ; 11E25 ; 33D45

\section{Introduction}

Lucas' cannonball problem \cite{key-1-24,key-1-15} of finding a square
number of cannonballs stacked in a square pyramid has only two solutions,
$1$ and $4900$, the later corresponding to the sum of the first
$24$ squared integers and was proven by several authors \cite{key-1-16,key-1-17,key-1-18,key-1-21,key-1-19,key-1-20}. 

More generally, finding all integers $s$ equal to the sum of $M$
consecutive integer squares starting from $a^{2}\geq1$ involves solving
a single Diophantine quadratic equation in three variables, two independent
($a$ and $M$) and one dependent ($s$). Philipp \cite{key-1-23},
extending the previous work of Alfred \cite{key-1-22}, proved that
there are a finite or an infinite number of solutions depending on
whether $M$ is or not a square integer and in the later case, using
a form of the generalized Pell equation. Beeckmans \cite{key-1-4},
after demonstrating eight necessary conditions on $M$ with a table
of values of $M<1000$ and the smallest values of $a>0$, developed
a method based on solving generalized Pell equations to provide all
solutions. In two previous papers, this Author showed \cite{key-10-1}
that no solution exists if $M$ is congruent to $3,5,6,7,8$ or $10\left(mod\,12\right)$
using Beeckmans necessary conditions, and that integer solutions exist
if $M$ is congruent to $0,1,2,4,9$ or $11\left(mod\,12\right)$,
yielding $M$ to be congruent to $0,9,24$ or $33\left(mod\,72\right)$,
and $M$ to be congruent to $1,2$ or $16\left(mod\,24\right)$. These
are called allowed values. Additional congruence conditions were demonstrated
\cite{key-10} on the allowed values of $M$ using Beeckmans' necessary
conditions. Furthermore, it was shown also \cite{key-10-1} that if
$M$ is a square itself, $M$ must be congruent to $1\left(mod\,24\right)$
and $\left(M-1\right)/24$ are all pentagonal numbers, except the
first two. The values of $M$ yielding integer solutions are given
in \cite{key-1-14}.

In this paper, firstly for non-square integer values of $M$, the
Diophantine quadratic equation expressing the sum of consecutive squared
integers equaling a squared integer is transformed into a generalized
Pell equation for which, depending on its number of fundamental solutions,
one or several infinite branch(es) of solutions in $a$ and $s$ are
found analytically, using Chebyshev polynomials. Secondly, for square
values of $M$, the quadratic equation reduces to a difference of
squares for which a finite number of solutions in $a$ and $s$ are
found analytically.

\section{Simple and generalized Pell equations}

Pell equations of the general form

\begin{equation}
X^{2}-DY^{2}=N\label{eq:3-1-1}
\end{equation}
with $X,Y,N\in\mathbb{Z}$ and squarefree $D\in\mathbb{Z}^{+}$, i.e.
$\sqrt{D}\notin\mathbb{Z}^{+}$, have been investigated in various
forms since long (see historical accounts in \cite{key-1-13,key-2,key-5,key-8})
and are treated in several classical text books (see e.g. \cite{key-7,key-3,key-1}
and references therein). A simple reminder is given here and further
details can be found in the references.

For $N=1$, the simple Pell equation reads classically 

\begin{equation}
X^{2}-DY^{2}=1\label{eq:3-1}
\end{equation}
which has, beside the trivial solution $(X_{t},Y_{t})=(1,0)$, a whole
infinite branch of solutions for $k\in\mathbb{Z}^{+}$ given by
\begin{eqnarray}
X_{k} & = & \frac{\left(X_{1}+\sqrt{D}Y_{1}\right)^{k}+\left(X_{1}-\sqrt{D}Y_{1}\right)^{k}}{2}\label{eq:3}\\
Y_{k} & = & \frac{\left(X_{1}+\sqrt{D}Y_{1}\right)^{k}-\left(X_{1}-\sqrt{D}Y_{1}\right)^{k}}{2\sqrt{D}}\label{eq:4}
\end{eqnarray}
where $(X_{1},Y_{1})$ is the fundamental solution to (\ref{eq:3-1}),
i.e. the smallest integer solution ($X_{1}>1,Y_{1}>0,\in\mathbb{Z}^{+}$)
different from the trivial solution. Among the five methods listed
by Robertson \cite{key-9} to find the fundamental solution $(X_{1},Y_{1})$,
the classical method introduced by Lagrange \cite{key-10-1-1}, based
on the continued fraction expansion of the quadratic irrational $\sqrt{D}$,
is central to several other methods.

\noindent For $N=n^{2}$ an integer square, the generalized Pell equation
(\ref{eq:3-1-1}) admits always integer solutions. The variable change
$\left(X^{\prime},Y^{\prime}\right)=\left(\left(X/n\right),\left(Y/n\right)\right)$
transforms the generalized Pell equation in a simple Pell equation
$X{}^{\prime2}-DY{}^{\prime2}=1$ which has integer solutions $\left(X_{k}^{\prime},Y_{k}^{\prime}\right)$.
The integer solutions to the generalized Pell equation can then be
found as $\left(X_{k},Y_{k}\right)=\left(nX_{k}^{\prime},nY_{k}^{\prime}\right)$.
Note however that not all solutions in $\left(X,Y\right)$ may be
found in this way (see e.g. \cite{key-1}).

\noindent For the case where $N$ is not an integer square, the generalized
Pell equation (\ref{eq:3-1-1}) can have either no solution at all,
or one or several fundamental solutions $\left(X_{1},Y_{1}\right)$,
and all integer solutions, if they exist, can be expressed in function
of the fundamental solution(s) $\left(X_{1},Y_{1}\right)$. Several
authors (see e.g. \cite{key-10-1-1,key-1-11,key-7,key-17,key-9,key-1-13,key-15,key-16}
and references therein) discussed how to find the fundamental solution(s)
of the generalized Pell equation, based on Lagrange's method of continued
fractions with various modifications (see e.g. \cite{key-1-2}), and
further how to find additional solutions from the fundamental solution(s). 

\noindent Noting now $\left(x_{f},y_{f}\right)$ the fundamental solutions
of the related simple Pell equation (\ref{eq:3-1}), the other solutions
$\left(X_{k},Y_{k}\right)$ can be found from the fundamental solution(s)
$\left(X_{1},Y_{1}\right)$ by 
\begin{equation}
X_{k}+\sqrt{D}Y_{k}=\pm\left(X_{1}+\sqrt{D}Y_{1}\right)\left(x_{f}+\sqrt{D}y_{f}\right)^{k}\label{eq:40-2}
\end{equation}
for a proper choice of sign $\pm$ \cite{key-9}.

It is less known that Chebyshev polynomials can be used to find the
additional solutions of the generalized Pell equation once the fundamental
solutions $\left(X_{1},Y_{1}\right)$ have been found. In fact, Chebyshev
polynomials $T_{k}\left(x\right)$ and $U_{k}\left(x\right)$ of the
first and second kinds \cite{key-1-1,key-1-3} can be defined as solutions
of the simple Pell equation

\begin{equation}
T_{k}\left(x\right)^{2}-\left(x^{2}-1\right)U_{k-1}\left(x\right)^{2}=1\label{eq:21}
\end{equation}
on a ring $R\left(x\right)$ \cite{key-1-9,key-1-10}. The following
lemma shows how to find the additional solutions of the generalized
Pell equation.
\begin{lem}
For $X,Y,D,N,k\in\mathbb{Z}^{+}$ and $D$ not a perfect square (i.e.
$\sqrt{D}\notin\mathbb{Z}$), if the generalized Pell equation
\begin{equation}
X^{2}-DY^{2}=N\label{eq:57-2}
\end{equation}
admits one or several fundamental solution(s) \textup{$\left(X_{1},Y_{1}\right)$},
then it admits one or several infinite branch(es) of solutions and
these can be written as
\begin{eqnarray}
X_{k} & = & X_{1}T_{k-1}\left(x_{f}\right)+DY_{1}y_{f}U_{k-2}\left(x_{f}\right)\label{eq:57-3}\\
Y_{k} & = & X_{1}y_{f}U_{k-2}\left(x_{f}\right)+Y_{1}T_{k-1}\left(x_{f}\right)\label{eq:58-2}
\end{eqnarray}
in function of the fundamental solution(s) \textup{$\left(X_{1},Y_{1}\right)$}
and of Chebyshev polynomials of the first and second kinds,\textup{
$T_{k-1}\left(x_{f}\right)$} and $U_{k-2}\left(x_{f}\right)$ evaluated
at the fundamental solution \textup{$\left(x_{f},y_{f}\right)$} of
the related simple Pell equation $X^{2}-DY^{2}=1$.\end{lem}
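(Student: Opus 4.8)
The plan is to take the known multiplicative relation (\ref{eq:40-2}) as the starting point and re-express the power of the simple-Pell unit in closed Chebyshev form. First I would fix the indexing so that $k=1$ returns the fundamental solution: writing each branch as
\[
X_{k}+\sqrt{D}\,Y_{k}=\left(X_{1}+\sqrt{D}\,Y_{1}\right)\left(x_{f}+\sqrt{D}\,y_{f}\right)^{k-1},
\]
the case $k=1$ reproduces $X_{1}+\sqrt{D}\,Y_{1}$, as it must. The sign and branch-selection ambiguity of (\ref{eq:40-2}) is harmless here, since each of the one-or-several infinite branches is generated from its own fundamental solution $\left(X_{1},Y_{1}\right)$ by the same recipe; it therefore suffices to treat one generic branch, with $\left(x_{f},y_{f}\right)$ the fundamental solution of $X^{2}-DY^{2}=1$ normalized by $y_{f}>0$.

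The key step is the Chebyshev power identity
\[
\left(x+\sqrt{x^{2}-1}\right)^{n}=T_{n}\left(x\right)+\sqrt{x^{2}-1}\,U_{n-1}\left(x\right),
\]
which follows from the defining relation (\ref{eq:21}) together with the parametrization $x=\cosh\phi$, under which $T_{n}\left(\cosh\phi\right)=\cosh n\phi$ and $U_{n-1}\left(\cosh\phi\right)=\sinh n\phi/\sinh\phi$; alternatively it follows by induction on $n$ from the three-term recurrences of $T_{n}$ and $U_{n}$. Applying it at $x=x_{f}$ with $n=k-1$, and using the simple Pell relation $x_{f}^{2}-Dy_{f}^{2}=1$ to replace $\sqrt{x_{f}^{2}-1}$ by $\sqrt{D}\,y_{f}$, yields
\[
\left(x_{f}+\sqrt{D}\,y_{f}\right)^{k-1}=T_{k-1}\left(x_{f}\right)+\sqrt{D}\,y_{f}\,U_{k-2}\left(x_{f}\right).
\]

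I would then substitute this into the generating relation, expand the product $\left(X_{1}+\sqrt{D}\,Y_{1}\right)\left(T_{k-1}\left(x_{f}\right)+\sqrt{D}\,y_{f}\,U_{k-2}\left(x_{f}\right)\right)$, and collect terms. Because $D$ is not a perfect square, $\sqrt{D}$ is irrational over $\mathbb{Q}$, so the splitting of $X_{k}+\sqrt{D}\,Y_{k}$ into a rational part and a $\sqrt{D}$-multiple is unique; equating the two parts gives exactly (\ref{eq:57-3}) and (\ref{eq:58-2}), the $DY_{1}y_{f}$ term of $X_{k}$ coming from the product $\sqrt{D}\,Y_{1}\cdot\sqrt{D}\,y_{f}$. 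A check at $k=1$, with $T_{0}\left(x_{f}\right)=1$ and $U_{-1}\left(x_{f}\right)=0$, recovers $\left(X_{k},Y_{k}\right)=\left(X_{1},Y_{1}\right)$.

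The main obstacle is not any hard estimate but establishing the power identity cleanly and justifying the unique rational/irrational separation; the delicate points are the off-by-one index bookkeeping, which produces $T_{k-1}$ and $U_{k-2}$ rather than $T_{k}$ and $U_{k-1}$, and the boundary convention $U_{-1}=0$. A fully self-contained alternative that bypasses (\ref{eq:40-2}) is to verify by induction that the right-hand sides of (\ref{eq:57-3}) and (\ref{eq:58-2}) obey the common recurrence $Z_{k+1}=2x_{f}Z_{k}-Z_{k-1}$, whose characteristic roots are $x_{f}\pm\sqrt{D}\,y_{f}$, with the correct initial pair, and then to confirm that $X_{k}^{2}-DY_{k}^{2}=N$ is preserved, using $T_{k-1}\left(x_{f}\right)^{2}-\left(x_{f}^{2}-1\right)U_{k-2}\left(x_{f}\right)^{2}=1$ from (\ref{eq:21}).
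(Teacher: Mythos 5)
Your argument is correct, but it follows a genuinely different route from the paper's. The paper proceeds by a double induction: it first establishes the one-step recurrences $X_{k}=x_{f}X_{k-1}+Dy_{f}Y_{k-1}$ and $Y_{k}=x_{f}Y_{k-1}+y_{f}X_{k-1}$ and checks that they preserve the norm $N$, and then shows inductively that the Chebyshev closed forms satisfy these recurrences, using the mixed relations $T_{k-1}(x)=xT_{k-2}(x)+\left(x^{2}-1\right)U_{k-3}(x)$ and $U_{k-2}(x)=xU_{k-3}(x)+T_{k-2}(x)$, before verifying $X_{k}^{2}-DY_{k}^{2}=N$ via (\ref{eq:21}). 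You instead start from the multiplicative relation (\ref{eq:40-2}), invoke the closed-form power identity $\left(x+\sqrt{x^{2}-1}\right)^{n}=T_{n}(x)+\sqrt{x^{2}-1}\,U_{n-1}(x)$, and read off (\ref{eq:57-3}) and (\ref{eq:58-2}) by the unique splitting of an element of $\mathbb{Z}[\sqrt{D}]$ into rational and $\sqrt{D}$-parts. Your route is shorter and makes the structural reason for the Chebyshev polynomials transparent (they are just the coordinates of powers of the fundamental unit); its only dependency is the power identity, which you justify adequately by the $\cosh$ parametrization or by induction, together with the boundary conventions $T_{0}=1$, $U_{-1}=0$ that you correctly flag. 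The paper's induction is more self-contained in that it does not presuppose (\ref{eq:40-2}) and explicitly re-verifies at each stage that the norm $N$ is preserved, which your main route gets implicitly from the multiplicativity of the norm (a one-line remark you could add for completeness); your closing "self-contained alternative" via the common recurrence $Z_{k+1}=2x_{f}Z_{k}-Z_{k-1}$ is essentially the paper's argument in compressed form. Note that, like the paper, you prove that each fundamental solution generates an infinite branch of solutions of the stated form, not that every solution arises this way; that stronger claim is asserted parenthetically in the paper but not actually established there either, and it is not required by the lemma as stated.
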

\begin{proof}
For $X,Y,D,N,k,i\in\mathbb{Z}^{+}$ and square free $D$, let $\left(X_{1},Y_{1}\right)$
be one of the fundamental solutions of (\ref{eq:57-2}) if they exist,
and let $\left(x_{f},y_{f}\right)$ be the fundamental solution of
the related simple Pell equation $X^{2}-DY^{2}=1$ (i.e. $x_{f}>1,y_{f}>0$). 

(i) Additional solutions $\left(X_{k},Y_{k}\right)$ of (\ref{eq:57-2})
can then be found by the recurrence relations
\begin{eqnarray}
X_{k} & = & x_{f}X_{k-1}+Dy_{f}Y_{k-1}\label{eq:55-3}\\
Y_{k} & = & x_{f}Y_{k-1}+y_{f}X_{k-1}\label{eq:56-4}
\end{eqnarray}
which can be demonstrated by induction. 

For $k=2$, as $\left(X_{1},Y_{1}\right)$ is a fundamental solution
of (\ref{eq:57-2}), $\left(X_{2},Y_{2}\right)$ obtained from (\ref{eq:55-3})
and (\ref{eq:56-4}) verify also (\ref{eq:57-2}) as $x_{f}^{2}-Dy_{f}^{2}=1$. 

Let $\left(X_{k-1},Y_{k-1}\right)$ be a solution of (\ref{eq:57-2}),
i.e. $X_{k-1}^{2}-DY_{k-1}^{2}=N$. Then multiplying the two terms
on the left of this equation by $1=x_{f}^{2}-Dy_{f}^{2}$, adding
and subtracting $2Dx_{f}y_{f}X_{k-1}Y_{k-1}$, rearranging and replacing
by (\ref{eq:55-3}) and (\ref{eq:56-4}) yield $X_{k}^{2}-DY_{k}^{2}=N$,
i.e. $\left(X_{k},Y_{k}\right)$ is also a solution of (\ref{eq:57-2}).

(ii) Further, to express $X_{k}$ and $Y_{k}$ in function of $X_{1}$,
$Y_{1}$, $x_{f}$ and $y_{f}$ only, one replaces successively for
$3\leq i\leq k$, $X_{i-1}$ and $Y_{i-1}$ in function of $X_{1}$
and $Y_{1}$ in the expressions (\ref{eq:55-3}) and (\ref{eq:56-4})
of $X_{i}$, $Y_{i}$ (with the substitution $x_{f}^{2}+Dy_{f}^{2}=2x_{f}^{2}-1$
whenever needed) to obtain successively Chebyshev polynomials of the
first and second kinds evaluated at $x_{f}$ and of increasing indices,
respectively $i-1$ and $i-2$, i.e. $T_{i-1}\left(x_{f}\right)$
and $U_{i-2}\left(x_{f}\right)$, yielding eventually (\ref{eq:57-3})
and (\ref{eq:58-2}). 

One can verify by induction that (\ref{eq:57-3}) and (\ref{eq:58-2})
yield all solutions to (\ref{eq:57-2}). 

As $\left(X_{1},Y_{1}\right)$ is a fundamental solution of (\ref{eq:57-2}),
for $k=2$, one has $T_{1}\left(x_{f}\right)=x_{f}$ and $U_{0}\left(x_{f}\right)=1$
in (\ref{eq:57-3}) and (\ref{eq:58-2}), yielding directly (\ref{eq:55-3})
and (\ref{eq:56-4}). 

Further, let us assume that $\left(X_{k-1},Y_{k-1}\right)$ with
\begin{eqnarray}
X_{k-1} & = & X_{1}T_{k-2}\left(x_{f}\right)+DY_{1}y_{f}U_{k-3}\left(x_{f}\right)\label{eq:31}\\
Y_{k-1} & = & X_{1}y_{f}U_{k-3}\left(x_{f}\right)+Y_{1}T_{k-2}\left(x_{f}\right)\label{eq:32}
\end{eqnarray}
are a solution of (\ref{eq:57-2}); then replacing (\ref{eq:31})
and (\ref{eq:32}) in (\ref{eq:55-3}) and (\ref{eq:56-4}) yield

\begin{eqnarray}
X_{k} & = & x_{f}\left[X_{1}T_{k-2}\left(x_{f}\right)+DY_{1}y_{f}U_{k-3}\left(x_{f}\right)\right]+\nonumber \\
 &  & Dy_{f}\left[X_{1}y_{f}U_{k-3}\left(x_{f}\right)+Y_{1}T_{k-2}\left(x_{f}\right)\right]\nonumber \\
 & = & X_{1}\left[x_{f}T_{k-2}\left(x_{f}\right)+\left(x_{f}^{2}-1\right)U_{k-3}\left(x_{f}\right)\right]+\nonumber \\
 &  & DY_{1}y_{f}\left[x_{f}U_{k-3}\left(x_{f}\right)+T_{k-2}\left(x_{f}\right)\right]\label{eq:27-1}\\
Y_{k} & = & x_{f}\left[X_{1}y_{f}U_{k-3}\left(x_{f}\right)+Y_{1}T_{k-2}\left(x_{f}\right)\right]+\nonumber \\
 &  & y_{f}\left[X_{1}T_{k-2}\left(x_{f}\right)+DY_{1}y_{f}U_{k-3}\left(x_{f}\right)\right]\nonumber \\
 & = & X_{1}y_{f}\left[x_{f}U_{k-3}\left(x_{f}\right)+T_{k-2}\left(x_{f}\right)\right]+\nonumber \\
 &  & Y_{1}\left[x_{f}T_{k-2}\left(x_{f}\right)+\left(x_{f}^{2}-1\right)U_{k-3}\left(x_{f}\right)\right]\label{eq:28}
\end{eqnarray}
where $Dy_{f}^{2}$ has been replaced by $Dy_{f}^{2}=x_{f}^{2}-1$
in (\ref{eq:27-1}) and (\ref{eq:28}). As
\begin{eqnarray}
T_{k-1}\left(x_{f}\right) & = & x_{f}T_{k-2}\left(x_{f}\right)+\left(x_{f}^{2}-1\right)U_{k-3}\left(x_{f}\right)\label{eq:35}\\
U_{k-2}\left(x_{f}\right) & = & x_{f}U_{k-3}\left(x_{f}\right)+T_{k-2}\left(x_{f}\right)\label{eq:36}
\end{eqnarray}

(see e.g. \cite{key-1-1}), (\ref{eq:27-1}) and (\ref{eq:28}) yield
directly (\ref{eq:57-3}) and (\ref{eq:58-2}). Replacing now (\ref{eq:57-3})
and (\ref{eq:58-2}) in (\ref{eq:57-2}) gives

\begin{equation}
X_{k}^{2}-DY_{k}^{2}=\left(X_{1}^{2}-DY_{1}^{2}\right)\left(T_{k-1}\left(x_{f}\right)^{2}-Dy_{f}^{2}U_{k-2}\left(x_{f}\right)^{2}\right)=N\label{eq:29}
\end{equation}
by (\ref{eq:21}) with $Dy_{f}^{2}=x_{f}^{2}-1$, showing that $\left(X_{k},Y_{k}\right)$
(\ref{eq:57-3}, \ref{eq:58-2}) also solve (\ref{eq:57-2}).

Finally, as $k$ is unbound, there is an infinity of solutions (\ref{eq:57-3})
and (\ref{eq:58-2}).
\end{proof}

\section{General method to find all solutions}

The sum of $M>1$ consecutive integer squares starting from $a^{2}\geq1$
being equal to an integer square $s^{2}$ can be written in all generality
as \cite{key-10-1}

\begin{equation}
\sum_{i=0}^{M-1}\left(a+i\right)^{2}=M\left[\left(a+\frac{M-1}{2}\right)^{2}+\frac{M^{2}-1}{12}\right]=s^{2}\label{eq:53-3}
\end{equation}

where $M$ are allowed values (see \cite{key-10-1,key-10}). To find
all integer solutions of (\ref{eq:53-3}), two cases are considered
and treated separately: first, $M$ is not a squared integer, and
second, $M$ is a squared integer.

\subsection{$M$ not a squared integer}

The next theorem allows to find all the solutions to (\ref{eq:53-3})
in $a$ and $s$ for allowed values of $M$ not being squared integers.
\begin{thm}
For $M>1,\sigma,j,k,a_{k,j},s_{k,j},x_{f},y_{f}\in\mathbb{Z}^{+},\lambda\in\mathbb{Q}$,
for all allowed square free values of $M$ (i.e. $\sqrt{M}\notin\mathbb{Z}$),
there is a number $\sigma\geq1$ of infinite branch(es) of values
of $a_{k,j}$, $1\leq j\leq\sigma$, such that the sums of squares
of $M$ consecutive integers starting from $a_{k,j}$ are equal to
squared positive integers $s_{k,j}^{2}$ and these can be written
in function of Chebyshev polynomials of the first and second kinds,\textup{
$T_{k-1}\left(x_{f}\right)$} and $U_{k-2}\left(x_{f}\right)$ as\textup{
\begin{eqnarray}
a_{k,j} & = & \frac{2\lambda s_{1,j}y_{f}U_{k-2}\left(x_{f}\right)+\left(2a_{1,j}+M-1\right)T_{k-1}\left(x_{f}\right)-\left(M-1\right)}{2}\label{eq:26}\\
s_{k,j} & = & s_{1,j}T_{k-1}\left(x_{f}\right)+\frac{\lambda M}{2}y_{f}\left(2a_{1,j}+M-1\right)U_{k-2}\left(x_{f}\right)\label{eq:27}
\end{eqnarray}
}with $\lambda=1$ for $M\equiv1\left(mod\,2\right)$ or $M\equiv2\left(mod\,4\right)$,
and $\lambda=1/2$ for $M\equiv0\left(mod\,4\right)$, and where $\left(a_{1,j},s_{1,j}\right)$
are the smallest positive values of $\left(a_{k,j},s_{k,j}\right)$
solutions of (\ref{eq:53-3}) and \textup{$\left(x_{f},y_{f}\right)$}
is the fundamental solution of the simple Pell equation $X^{2}-\left(\lambda^{2}M\right)Y^{2}=1$.\end{thm}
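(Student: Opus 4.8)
The plan is to convert the quadratic relation (\ref{eq:53-3}) into a generalized Pell equation of the exact shape treated in the Lemma, read off its infinite branch(es) of solutions from (\ref{eq:57-3})--(\ref{eq:58-2}), and then undo the substitution to recover $a_{k,j}$ and $s_{k,j}$.

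First I would complete the square in (\ref{eq:53-3}) by introducing the auxiliary variable $w=2a+M-1$, so that (\ref{eq:53-3}) reads $4s^{2}-Mw^{2}=M\left(M^{2}-1\right)/3$, equivalently $s^{2}-\frac{M}{4}w^{2}=\frac{M\left(M^{2}-1\right)}{12}$. The rational factor $\lambda$ is precisely the device that casts this as a Pell equation $X^{2}-\left(\lambda^{2}M\right)Y^{2}=N$ with integer coefficient $D=\lambda^{2}M$, the identification $X\leftrightarrow s$ and $Y\leftrightarrow$ (an affine function of $a$) being kept uniform across the cases. For $M\equiv1\left(mod\,2\right)$ I would take $\lambda=1$, $X=s$, $Y=w/2=a+\left(M-1\right)/2\in\mathbb{Z}$ and $N=M\left(M^{2}-1\right)/12$; for $M\equiv0\left(mod\,4\right)$ I would take $\lambda=1/2$ so that $D=M/4\in\mathbb{Z}$, with $X=s$, $Y=w$ and the same $N$; and for $M\equiv2\left(mod\,4\right)$ again $\lambda=1$, $D=M$, but with $X=2s$, $Y=w$ and $N=M\left(M^{2}-1\right)/3$ to clear denominators. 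In each case $N$ is an integer because, for allowed $M$, the required divisibility by $12$ (resp. $3$) holds.

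Next, since $\lambda^{2}M$ is a positive non-square integer, the simple Pell equation $X^{2}-\left(\lambda^{2}M\right)Y^{2}=1$ has an integer fundamental solution $\left(x_{f},y_{f}\right)$, while the generalized equation has either no solution or a finite number of fundamental solutions; I would let $\sigma\geq1$ denote their number, index them by $j$, and identify $\left(X_{1,j},Y_{1,j}\right)$ with the smallest positive pair $\left(a_{1,j},s_{1,j}\right)$ in each class. Applying the Lemma to each class yields one infinite branch $\left(X_{k,j},Y_{k,j}\right)$ through (\ref{eq:57-3})--(\ref{eq:58-2}) in terms of $T_{k-1}\left(x_{f}\right)$ and $U_{k-2}\left(x_{f}\right)$. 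Substituting back $X_{k,j}=s_{k,j}$ (or $2s_{k,j}$) and the affine expression for $Y_{k,j}$, then solving for $s_{k,j}$ and $a_{k,j}$, should reproduce (\ref{eq:26})--(\ref{eq:27}) once the $\lambda$-dependent constants $2\lambda$ and $\lambda M/2$ are tracked through the three cases; the evaluations $T_{0}\left(x_{f}\right)=1$ and $U_{-1}\left(x_{f}\right)=0$ confirm that $k=1$ returns $\left(a_{1,j},s_{1,j}\right)$.

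The step I expect to be the main obstacle is integrality in the borderline case $M\equiv2\left(mod\,4\right)$, where the most direct choice $Y=a+\left(M-1\right)/2$ is only a half-integer, so one must show that the resulting $a_{k,j},s_{k,j}$ are nonetheless integers. I would resolve this by a parity analysis of the simple Pell solution: from $x_{f}^{2}-1=My_{f}^{2}$ and $M\equiv2\left(mod\,4\right)$, an odd $y_{f}$ would force $x_{f}^{2}\equiv3\left(mod\,4\right)$, which is impossible, so $y_{f}$ is even and hence $x_{f}$ is odd. Feeding this into the recurrences (\ref{eq:55-3})--(\ref{eq:56-4}) with the integer seeds $X_{1}=2s_{1}$ (even) and $Y_{1}=w$ (odd) then shows by induction that $X_{k,j}$ stays even and $Y_{k,j}$ stays odd, so that $s_{k,j}=X_{k,j}/2$ and $a_{k,j}=\left(Y_{k,j}-\left(M-1\right)\right)/2$ are integers. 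The remaining verification---that (\ref{eq:57-2}) holds with the stated $N$ and that the Chebyshev identity (\ref{eq:21}) with $Dy_{f}^{2}=x_{f}^{2}-1$ returns $N$ exactly, as in (\ref{eq:29})---is routine given the Lemma.
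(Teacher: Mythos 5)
Your proposal is correct and follows essentially the same route as the paper: split into the three congruence classes $M\equiv1\ (\mathrm{mod}\,2)$, $M\equiv0\ (\mathrm{mod}\,4)$, $M\equiv2\ (\mathrm{mod}\,4)$, recast (\ref{eq:53-3}) as the generalized Pell equations (\ref{eq:55-2})--(\ref{eq:62-1}), apply the Lemma's Chebyshev formulas (\ref{eq:57-3})--(\ref{eq:58-2}) to each fundamental solution, and invert the substitution to obtain (\ref{eq:26})--(\ref{eq:27}). Your added parity argument showing $y_{f}$ even and $X_{k}$ staying even in the case $M\equiv2\ (\mathrm{mod}\,4)$ addresses an integrality point the paper passes over silently, which is a welcome refinement rather than a divergence.
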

\begin{proof}
For $M>1,\sigma,j,k,a,s,a_{k,j},s_{k,j},x_{f},y_{f},X,Y,N,D\in\mathbb{Z}^{+},\lambda\in\mathbb{Q}$,
for the allowed square free values of $M$, rewriting (\ref{eq:53-3})
for $M\equiv1\left(mod\,2\right)$ as

\begin{equation}
s^{2}-M\left(a+\frac{M-1}{2}\right)^{2}=\frac{M\left(M^{2}-1\right)}{12}\label{eq:55-2}
\end{equation}
or for $M\equiv0\left(mod\,4\right)$ as 
\begin{equation}
s^{2}-\frac{M}{4}\left(2a+M-1\right)^{2}=\frac{M\left(M^{2}-1\right)}{12}\label{eq:56-3}
\end{equation}
or for $M\equiv2\left(mod\,4\right)$ as

\begin{equation}
\left(2s\right)^{2}-M\left(2a+M-1\right)^{2}=\frac{M\left(M^{2}-1\right)}{3}\label{eq:62-1}
\end{equation}
transform (\ref{eq:53-3}) in generalized Pell equations (\ref{eq:3-1-1})
in $X=s$ or $2s$ and $Y=\left(a+\left(M-1\right)/2\right)$ or $\left(2a+M-1\right)$,
with $N=M\left(M^{2}-1\right)/12$ or $M\left(M^{2}-1\right)/3$ and
$D=M$ or $M/4$. 

If these generalized Pell equations (\ref{eq:55-2}) to (\ref{eq:62-1})
admit $\sigma$ solution(s), then for $1\leq j\leq\sigma$,

(i) for $M\equiv1\left(mod\,2\right)$, let $\left(s_{1,j},\left(a_{1,j}+\left(M-1\right)/2\right)\right)$
be the $j^{th}$ fundamental solution of (\ref{eq:55-2}) and let
$\left(x_{f},y_{f}\right)$ be the fundamental solution of the related
simple Pell equation $X^{2}-MY^{2}=1$, i.e. $x_{f}>1$ and $y_{f}>0$.
Then, (\ref{eq:57-3}) and (\ref{eq:58-2}) yield
\begin{eqnarray}
a_{k,j} & = & s_{1,j}y_{f}U_{k-2}\left(x_{f}\right)+\left(a_{1,j}+\frac{M-1}{2}\right)T_{k-1}\left(x_{f}\right)-\left(\frac{M-1}{2}\right)\label{eq:63}\\
s_{k,j} & = & s_{1,j}T_{k-1}\left(x_{f}\right)+My_{f}\left(a_{1,j}+\frac{M-1}{2}\right)U_{k-2}\left(x_{f}\right)\label{eq:62}
\end{eqnarray}

(ii) for $M\equiv0\left(mod\,4\right)$, similarly let $\left(s_{1,j},\left(2a_{1,j}+M-1\right)\right)$
be the $j^{th}$ fundamental solution of (\ref{eq:56-3}) and let
$\left(x_{f},y_{f}\right)$ be the fundamental solution of the related
simple Pell equation $X^{2}-\left(M/4\right)Y^{2}=1$. Then, (\ref{eq:57-3})
and (\ref{eq:58-2}) yield
\begin{eqnarray}
a_{k,j} & = & \frac{s_{1,j}y_{f}U_{k-2}\left(x_{f}\right)+\left(2a_{1,j}+M-1\right)T_{k-1}\left(x_{f}\right)-\left(M-1\right)}{2}\label{eq:65}\\
s_{k,j} & = & s_{1,j}T_{k-1}\left(x_{f}\right)+\frac{M}{4}y_{f}\left(2a_{1,j}+M-1\right)U_{k-2}\left(x_{f}\right)\label{eq:64}
\end{eqnarray}

(iii) for $M\equiv2\left(mod\,4\right)$, similarly let $\left(2s_{1,j},\left(2a_{1,j}+M-1\right)\right)$
be the $j^{th}$ fundamental solution of (\ref{eq:62-1}) and let
$\left(x_{f},y_{f}\right)$ be the fundamental solution of the related
simple Pell equation $X^{2}-MY^{2}=1$. Then, (\ref{eq:57-3}) and
(\ref{eq:58-2}) yield
\begin{eqnarray}
a_{k,j} & = & \frac{2s_{1,j}y_{f}U_{k-2}\left(x_{f}\right)+\left(2a_{1,j}+M-1\right)T_{k-1}\left(x_{f}\right)-\left(M-1\right)}{2}\label{eq:68-1}\\
s_{k,j} & = & s_{1,j}T_{k-1}\left(x_{f}\right)+\frac{M}{2}y_{f}\left(2a_{1,j}+M-1\right)U_{k-2}\left(x_{f}\right)\label{eq:67-1}
\end{eqnarray}

Finally, as $k$ is unbound, there is in each case and for each $1\leq j\leq\sigma$
an infinity of solutions $\left(s_{k,j},a_{k,j}\right)$.
\end{proof}
Note that some of the first solutions $a_{1,j}$ may be rejected if
the $j^{th}$ fundamental solution of (\ref{eq:55-2}) (or (\ref{eq:56-3})
or (\ref{eq:62-1})) is such that $\left(a_{1,j}+\left(M-1\right)/2\right)<\left(M-1\right)/2$,
yielding a non-positive value of $a_{1,j}$.

In the following examples, the method indicated by Matthews \cite{key-16}
based on an algorithm by Frattini \cite{key-2-1,key-2-2,key-2-3}
using Nagell's bounds \cite{key-7,key-4-1} is used to find the fundamental
solution(s) of the generalized Pell equation.

A first example for the case $M\equiv11\left(mod\,12\right)$, let
$M=11$. Then, (\ref{eq:55-2}) reads $s^{2}-11\left(a+5\right)^{2}=110$,
which has $\sigma=2$ fundamental solutions, yielding, with $1\leq j\leq2$
, $\left(s_{1,j},\left(a_{1,j}+5\right)\right)=\left(11,1\right),\left(77,23\right)$
and the fundamental solution of the related simple Pell equation $X^{2}-11Y^{2}=1$
is $\left(x_{f},y_{f}\right)=\left(10,3\right)$. Replacing in (\ref{eq:63})
and (\ref{eq:62}) yield then the solutions given in Table 1. The
first solution $\left(a_{1,1},s_{1,1}\right)$ is rejected as $a_{1,1}<0$.
The solutions are then ordered as $a_{1,2}<a_{2,1}<a_{2,2}<a_{3,1}<...$.

\begin{table}
\noindent \begin{centering}
\caption{First solutions $\left(a_{k,j},s_{k,j}\right)$ for $M=11$, $1\leq j\leq2$
and $1\leq k\leq6$ of the $\sigma=2$ infinite branches of solutions
of $s^{2}-11\left(a+5\right)^{2}=110$ }

\par\end{centering}

\begin{centering}
\begin{tabular}{|c|c|c|c|c|}
\hline 
$k$ & $a_{k,1}$ & $s_{k,1}$ & $a_{k,2}$ & $s_{k,2}$\tabularnewline
\hline 
\hline 
1 & {[}-4{]} & {[}11{]} & 18 & 77\tabularnewline
\hline 
2 & 38 & 143 & 456 & 1529\tabularnewline
\hline 
3 & 854 & 2849 & 9192 & 30503\tabularnewline
\hline 
4 & 17132 & 56837 & 183474 & 608531\tabularnewline
\hline 
5 & 341876 & 1133891 & 3660378 & 12140117\tabularnewline
\hline 
6 & 6820478 & 22620983 & 73024176 & 242193809\tabularnewline
\hline 
\end{tabular}
\par\end{centering}

\centering{}{[}$a_{1,1}${]}: solution rejected as $a_{1,1}\leq0$
\end{table}

A second example for the case $M\equiv0\left(mod\,24\right)$, let
$M=24$. Then, (\ref{eq:56-3}) reads $s^{2}-6\left(2a+23\right)^{2}=1150$,
having $\sigma=6$ fundamental solutions, $\left(s_{1,j},\left(2a_{1,j}+23\right)\right)$
$=\left(34,1\right),\left(38,7\right),\left(50,15\right)\left(70,25\right),\left(106,41\right),\left(158,63\right)$
and the fundamental solution of the related simple Pell equation $X^{2}-6Y^{2}=1$
is $\left(x_{f},y_{f}\right)=\left(5,2\right)$. Replacing in (\ref{eq:65})
and (\ref{eq:64}) yield then the solutions given in Table 2. The
first three solutions $\left(a_{1,j},s_{1,j}\right)$ for $1\leq j\leq3$
are rejected as $a_{1,j}<0$. The solutions are then ordered as $a_{1,4}<a_{1,5}<a_{1,6}<a_{2,1}<a_{2,3}<a_{2,4}<...$.
Note that the first solution of the fourth branch ($a_{1,4}=1$, $s_{1,4}=70$)
gives the second solution of Lucas' cannonball problem.

\begin{table}
\noindent \begin{centering}
\caption{First solutions $\left(a_{k,j},s_{k,j}\right)$ for $M=24$, $1\leq j\leq6$
and $1\leq k\leq6$ of the $\sigma=6$ infinite branches of solutions
of $s^{2}-6\left(2a+23\right)^{2}=1150$ }

\par\end{centering}

\begin{centering}
\begin{tabular}{|c|c|c|c|c|c|c|}
\hline 
$k$ & $a_{k,1}$ & $s_{k,1}$ & $a_{k,2}$ & $s_{k,2}$ & $a_{k,3}$ & $s_{k,3}$\tabularnewline
\hline 
\hline 
1 & {[}-11{]} & {[}34{]} & {[}-8{]} & {[}38{]} & {[}-4{]} & {[}50{]}\tabularnewline
\hline 
2 & 25  & 182 & 44 & 274 & 76 & 430\tabularnewline
\hline 
3 & 353 & 1786 & 540 & 2702 & 856 & 4250\tabularnewline
\hline 
4 & 3597 & 17678 & 5448 & 26746 & 8576 & 42070\tabularnewline
\hline 
5 & 35709 & 174994 & 54032 & 264758 & 84996 & 416450\tabularnewline
\hline 
6 & 353585 & 1732262 & 534964 & 2620834 & 841476 & 4122430\tabularnewline
\hline 
\end{tabular}
\par\end{centering}

\begin{centering}
\begin{tabular}{|c|c|c|c|c|c|c|}
\hline 
$k$ & $a_{k,4}$ & $s_{k,4}$ & $a_{k,5}$ & $s_{k,5}$ & $a_{k,6}$ & $s_{k,6}$\tabularnewline
\hline 
\hline 
1 & 1 & 70 & 9 & 106 & 20 & 158\tabularnewline
\hline 
2 & 121 & 650 & 197 & 1022 & 304 & 1546\tabularnewline
\hline 
3 & 1301 & 6430 & 2053 & 10114 & 3112 & 15302\tabularnewline
\hline 
4 & 12981 & 63650 & 20425 & 100118 & 30908 & 151474\tabularnewline
\hline 
5 & 128601 & 630070 & 202289 & 991066 & 306060 & 1499438\tabularnewline
\hline 
6 & 1273121 & 6237050 & 2002557 & 9810542 & 29991872 & 146929622\tabularnewline
\hline 
\end{tabular}
\par\end{centering}

\centering{}{[}$a_{1,j}${]}: solutions rejected as $a_{1,j}\leq0$
for $1\leq j\leq3$
\end{table}

A third example for the case $M\equiv2\left(mod\,24\right)$, let
$M=2$. Then, (\ref{eq:62-1}) reads $\left(2s\right)^{2}-2\left(2a+1\right)^{2}=2$,
having $\sigma=1$ fundamental solution $\left(2s_{1,j},\left(2a_{1,j}+1\right)\right)=\left(2,1\right)$
and the fundamental solution of the related simple Pell equation $X^{2}-2Y^{2}=1$
is $\left(x_{f},y_{f}\right)=\left(3,2\right)$. Replacing in (\ref{eq:68-1})
and (\ref{eq:67-1}) yield then the solutions given in Table 3, where
the first solution is again to be rejected (it corresponds to the
identity relation $0^{2}+1^{2}=1^{2}$) and the second solution is
the Pythagorean relation $3^{2}+4^{2}=5^{2}$.

\begin{table}
\noindent \begin{centering}
\caption{First solutions $\left(a_{k,1},s_{k,1}\right)$ for $M=2$ and $1\leq k\leq6$
of the single infinite branch of solutions of $\left(2s\right)^{2}-2\left(2a+1\right)^{2}=2$ }

\par\end{centering}

\begin{centering}
\begin{tabular}{|c|c|c|}
\hline 
$k$ & $a_{k,1}$ & $s_{k,1}$\tabularnewline
\hline 
\hline 
1 & {[}0{]} & {[}1{]}\tabularnewline
\hline 
2 & 3  & 5\tabularnewline
\hline 
3 & 20 & 29\tabularnewline
\hline 
4 & 119 & 169\tabularnewline
\hline 
5 & 696 & 985\tabularnewline
\hline 
6 & 4059 & 5741\tabularnewline
\hline 
\end{tabular}
\par\end{centering}

\centering{}{[}$a_{1,1}${]}: solution rejected as $a_{1,1}\leq0$
\end{table}

Still for the case $M=2\left(mod\,24\right)$, let $M=842$ which
does not yield solutions to (\ref{eq:53-3}). Indeed, although the
related simple Pell equation $X^{2}-842Y^{2}=1$ has the fundamental
solution $\left(x_{f},y_{f}\right)=\left(1683,58\right)$, the generalized
Pell equation from (\ref{eq:62-1}) $\left(2s\right)^{2}-842\left(2a+841\right)^{2}=198982282$
has no fundamental solution ($\sigma=0$). This case was already signaled
by Beeckmans \cite{key-1-4}: the value of $M=842=24\times35+2$,
although complying with Beeckmans' conditions does not yield solutions
to (\ref{eq:53-3}) (see also \cite{key-10}).

\subsection{$M$ is a squared integer}

It was demonstrated \cite{key-10-1} that, if $M$ is a square integer,
then for the sums of $M$ consecutive squared integers to equal integer
squares, $M\equiv1\left(mod\,24\right)$ and $\exists g_{i}\in\mathbb{Z}^{+}$
such that $M=24g_{n}+1$ where $g_{n}=n\left(3n-1\right)/2$ are all
generalized pentagonal numbers $\forall n\in\mathbb{Z}$ \cite{key-8-1,key-6},
yielding $M=\left(6n-1\right)^{2}$, i.e. $g_{n}=0,1,2,5,7,12,15,22,26,35,40,51,57,...$
\cite{key-5-1}, yielding 

$M=1,25,49,121,169,289,361,529,625,841,961,1225,1369,...$ of which
the first two $M=1,25$, should be rejected as $M>1$ and $a>0$ (see
further).

For $M$ an integer square, the above method with solutions of the
Pell equation can clearly not be followed as Pell equations are not
defined for $D=M$ being a squared integer. Instead, another method
(see e.g. \cite{key-1-11} p. 486, and \cite{key-17}) is used in
the following theorem showing how to find the finite number of solutions
for $M$ being a squared integer. 
\begin{thm}
For $M>1,\varphi,k,a_{k},s_{k}\in\mathbb{Z}^{+}$, $n\in\mathbb{Z}$,
for all allowed squared integer values of $M=\left(6n-1\right)^{2}$,
there is a finite number $\varphi$ of values of $a_{k}$ such that
the sums of squares of $M$ consecutive integers starting from $a_{k}$
are equal to squared positive integers $s_{k}^{2}$, that can be written
as 
\begin{eqnarray}
s_{k} & = & \left(6n-1\right)\left(\frac{v_{k}+u_{k}}{2}\right)\label{eq:37}\\
a_{k} & = & \frac{v_{k}-u_{k}}{2}-6n\left(3n-1\right)\label{eq:38}
\end{eqnarray}

where $u_{k}$ and $v_{k}$ are the factor and co-factor of\textup{
$\left[2n\left(3n-1\right)\left(6n\left(3n-1\right)+1\right)\right]$,
}with $u_{k}<v_{k}$, $u_{k}\equiv v_{k}\equiv0\left(mod\,2\right)$
and $1\leq k\leq\varphi$.\end{thm}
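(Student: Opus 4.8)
The plan is to exploit that $M=\left(6n-1\right)^{2}$ is a perfect square in order to convert (\ref{eq:53-3}) into a plain difference of two squares rather than a Pell equation. Writing $m=6n-1$ so that $M=m^{2}$, I would first record the two elementary identities $\frac{m^{2}-1}{2}=6n\left(3n-1\right)$ and $\frac{m^{4}-1}{12}=2n\left(3n-1\right)\bigl(6n\left(3n-1\right)+1\bigr)$, both immediate from $m^{2}=36n^{2}-12n+1$ together with the observation that $m^{2}\equiv1\left(mod\,12\right)$, so that $12$ divides $m^{4}-1$ and the right-hand side is a positive integer. Substituting $M=m^{2}$ into (\ref{eq:53-3}) and abbreviating $Y=a+\frac{m^{2}-1}{2}=a+6n\left(3n-1\right)$, the equation becomes $s^{2}=m^{2}\bigl(Y^{2}+\frac{m^{4}-1}{12}\bigr)$.

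Next I would reduce the integrality of $s$ to a perfect-square condition. Since the bracket $Y^{2}+\frac{m^{4}-1}{12}$ is a positive integer and $m^{2}$ is a perfect square, $s$ is a positive integer if and only if this bracket is itself a perfect square, say $r^{2}$ with $r\in\mathbb{Z}^{+}$, in which case $s=mr=\left(6n-1\right)r$. This turns the problem into solving the difference of squares $r^{2}-Y^{2}=N$ with $N=2n\left(3n-1\right)\bigl(6n\left(3n-1\right)+1\bigr)$, the very quantity appearing in the statement. Factoring as $\left(r-Y\right)\left(r+Y\right)=N$ and setting $u=r-Y$, $v=r+Y$ with $u\leq v$ gives $r=\frac{u+v}{2}$ and $Y=\frac{v-u}{2}$; back-substituting $s=mr$ and $a=Y-6n\left(3n-1\right)$ reproduces exactly (\ref{eq:37}) and (\ref{eq:38}).

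The parity conditions are the one point needing a short argument. Because $Y$ and $r$ are integers, $u+v=2r$ and $v-u=2Y$ are even, so $u\equiv v\left(mod\,2\right)$; and since exactly one of $n$ and $3n-1$ is even, $N$ is even, so the two factors cannot both be odd and are therefore both even, giving $u\equiv v\equiv0\left(mod\,2\right)$. Thus every solution corresponds to a factorization $N=uv$ into even factors with $u<v$ (the degenerate case $u=v$ forces $Y=0$, hence $a=-6n\left(3n-1\right)<0$, and is discarded), and conversely each such factorization yields integers $r,Y$ and a candidate pair $\left(a,s\right)$. Finiteness of $\varphi$ is then immediate, as $N$ has only finitely many divisors; the remaining filtering keeps only the factorizations with $v-u>12n\left(3n-1\right)$, which is exactly the condition that guarantees $a>0$.

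The only real obstacle here is bookkeeping rather than depth: I must check that the reduction $s=mr$ captures every integer solution, which holds because $m^{2}\mid s^{2}$ implies $m\mid s$, and that the correspondence between solutions and even factor pairs is a genuine bijection once positivity of $a$ is imposed. Everything else---the two closed-form identities for $\frac{m^{2}-1}{2}$ and $\frac{m^{4}-1}{12}$, and the parity check---is routine algebra.
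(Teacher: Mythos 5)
Your proposal is correct and follows essentially the same route as the paper: reduce (\ref{eq:53-3}) to the difference of squares $X^{2}-Y^{2}=N$ with $X=s/(6n-1)$, $Y=a+6n(3n-1)$, $N=2n(3n-1)\left(6n(3n-1)+1\right)$, and enumerate the finitely many factorizations of $N$ into two even factors. Your write-up is in fact slightly more careful than the paper's on two points it merely asserts, namely why $(6n-1)\mid s$ and why both factors must be even.
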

\begin{proof}
For $M>1,\varphi,k,a,s,a_{k},s_{k}\in\mathbb{Z}^{+}$, $n\in\mathbb{Z}$,
from (\ref{eq:53-3}), $s$ must be such as $s\equiv0\left(mod\,\left(6n-1\right)\right)$.
Replacing in (\ref{eq:55-2}) yields then 
\begin{equation}
\left(\frac{s}{6n-1}\right)^{2}-\left(a+6n\left(3n-1\right)\right)^{2}=2n\left(3n-1\right)\left(6n\left(3n-1\right)+1\right)\label{eq:70}
\end{equation}
i.e. the difference of two integer squares must be an even integer. 

One has then to determine all the integer values of $X_{k}$ and $Y_{k}$
solutions of the equation $X^{2}-Y^{2}=N$, with $X=s/\left(6n-1\right)$,
$Y=\left(a+6n\left(3n-1\right)\right)$ and $N=2n\left(3n-1\right)\left(6n\left(3n-1\right)+1\right)$.
For this, let $N=u_{k}v_{k}$ and only both even factor and co-factor
$u_{k}$ and $v_{k}$ are considered as $N\equiv0\left(mod\,4\right)$
\cite{key-17}. As $N$ is finite, there is a finite number $\varphi$
of ways of decomposing $N$ in product of two even factors. Then,
with $u_{k}<v_{k}$ and $1\leq k\leq\varphi$, $X_{k}=\left(v_{k}+u_{k}\right)/2$
and $Y_{k}=\left(v_{k}-u_{k}\right)/2$, yielding $s_{k}=\left(6n-1\right)\left(v_{k}+u_{k}\right)/2$
and $a_{k}=\left(\left(v_{k}-u_{k}\right)/2\right)-6n\left(3n-1\right)$
.
\end{proof}
Note that here also some of the first solutions $a_{k}$ may be rejected
if half the difference of the factor and co factor of $N$ is such
that $\left(\left(v_{k}-u_{k}\right)/2\right)<6n\left(3n-1\right)$,
yielding a non-positive value of $a_{k}$. 

As a first example, let $M=25$ with $n=1$. Then $X=s/5$, $Y=a+12$
and there is only one way to decompose $N=52$ in the product of two
even integer factors, $N=52=2\times26=u_{1}v_{1}$, yielding then
$\varphi=1$ and there is only one solution, given by $X_{1}=14$
and $Y_{1}=12$, or $s_{1}=70$ and $a_{1}=0$. This case for $M=25$
must be rejected as it has no solution with $a>0$. Note however that
this solution with $s=70$ and $a=0$ for the case $M=25$ is obviously
equivalent to the solution with $s=70$ and $a=1$ for the case $M=24$
of Lucas' cannonball problem.

A second example, let $M=289$ with $n=3$. Then $X=s/17$, $Y=a+144$
and $N=6960$. As there are twelve ways to decompose $N=6960$ in
products of two even integer factors, there are $\varphi=12$ solutions
in $X$ and $Y$ given in Table 4, five of which have to be rejected
as the corresponding values of $a_{k}$ are negative.

\begin{table}
\caption{All $\varphi=12$ solutions for $M=289$ with $N=u_{k}v_{k}=6960$ }

\begin{centering}
\begin{tabular}{|c|c|c|c|c|c|}
\hline 
$k$ & $u_{k}\times v_{k}$ & $X_{k}$ & $Y_{k}$ & $s_{k}$ & $a_{k}$\tabularnewline
\hline 
\hline 
1 & $60\times116$ & 88 & 28 & {[}1496{]} & {[}-116{]}\tabularnewline
\hline 
2 & $58\times120$ & 89 & 31 & {[}1513{]} & {[}-113{]}\tabularnewline
\hline 
3 & $40\times174$ & 107 & 67 & {[}1819{]} & {[}-77{]}\tabularnewline
\hline 
4 & $30\times232$ & 131 & 101 & {[}2227{]} & {[}-43{]}\tabularnewline
\hline 
5 & $24\times290$ & 157 & 133 & {[}2669{]} & {[}-11{]}\tabularnewline
\hline 
6 & $20\times348$ & 184 & 164 & 3128 & 20\tabularnewline
\hline 
7 & $12\times580$ & 296 & 284 & 5032 & 140\tabularnewline
\hline 
8 & $10\times696$ & 353 & 343 & 6001 & 199\tabularnewline
\hline 
9 & $8\times870$ & 439 & 431 & 7463 & 287\tabularnewline
\hline 
10 & $6\times1160$ & 583 & 577 & 9911 & 433\tabularnewline
\hline 
11 & $4\times1740$ & 872 & 868 & 14824 & 724\tabularnewline
\hline 
12 & $2\times3480$ & 1741 & 1739 & 29597 & 1595\tabularnewline
\hline 
\end{tabular}
\par\end{centering}

\centering{}{[}$a_{k}${]}: solutions to be rejected as $a_{k}<0$
\end{table}

\section{Conclusion}

The problem of finding all the integer solutions of the sum of $M$
consecutive integer squares starting at $a^{2}\geq1$ being equal
to a squared integer $s^{2}$ can be written as a Diophantine quadratic
equation $M\left[\left(a+\left(M-1\right)/2\right)^{2}+\left(M^{2}-1\right)/12\right]=s^{2}$
in variables $a$ and $s$. Based on previous results, it is known
that integer solutions exist only if $M\equiv0,9,24$ or $33\left(mod\,72\right)$;
or $M\equiv1,2$ or $16\left(mod\,24\right)$; or $M\equiv11\left(mod\,12\right)$. 

If $M$ is different from a square integer, the Diophantine quadratic
equation is solved generally by transforming it into a generalized
Pell equation whose form depends on the $\left(mod\,4\right)$ congruent
value of $M$, and whose solutions, if existing, yield all the solutions
in $a$ and $s$ for a given value of $M$. Depending on whether this
generalized Pell equation admits one or several fundamental solution(s),
there are one or several infinite branches of solutions in $a$ and
$s$ that can be written simply in function of Chebyshev polynomials
evaluated at the fundamental solutions of the related simple Pell
equation. 

If $M$ is a square integer, for $M\equiv1\left(mod\,24\right)$ and
$M=\left(6n-1\right)^{2}$, $\forall n\in\mathbb{Z}$, then the Diophantine
quadratic equation in variables $a$ and $s$ reduces to a simple
difference of integer squares which admits a finite number of solutions,
yielding a finite number solutions in $a$ and $s$ to the initial
problem.

\section{Acknowledgment}

The author acknowledges Dr C. Thiel for the help brought throughout
this paper.

\end{document}